%

\documentclass[aap,MSNbibl,dvips]{arximspdf}

%

\doi{10.1214/13-AAP961} 
\volume{24}
\issue{4}
\pubyear{2014}
\firstpage{1739}
\lastpage{1766}

\makeatletter
\newcommand{\widebar}{\overline}
\newcommand{\rrvert}{\vert}
\newcommand{\llvert}{\vert}
\newtheorem{theorem}{Theorem}
\newtheorem{corollary}{Corollary}
\newproclaim{definition}{Definition}
\newproclaim{example}{Example}
\newtheorem{lemma}{Lemma}
\newtheorem{problem}{Problem}
\newtheorem{proposition}{Proposition}
\newproclaim{remark}{Remark}
\makeatother

\begin{document}
\begin{frontmatter}

\title{Filtration shrinkage, strict local\vspace*{2pt} martingales and~the F\"ollmer measure}
\runtitle{Filtration shrinkage}
\pdftitle{Filtration shrinkage, strict local martingales and the Follmer measure}

\begin{aug}
\author{\fnms{Martin} \snm{Larsson}\corref{}\ead[label=e1]{martin.larsson@epfl.ch}}
\runauthor{M. Larsson}
\affiliation{Swiss Finance Institute, \'Ecole Polytechnique F\'ed\'erale de Lausanne}
\address{Swiss Finance Institute, EPFL\\
Quartier UNIL-Dorigny, Extranef 244\\
1015 Lausanne\\
Switzerland\\
\printead{e1}} 
\end{aug}

\received{\smonth{12} \syear{2012}}
\revised{\smonth{7} \syear{2013}}

%
\begin{abstract}
When a strict local martingale is projected onto a subfiltration to
which it is not adapted,
the local martingale property may be lost, and the finite variation
part of the projection may
have singular paths. This phenomenon has consequences for arbitrage
theory in mathematical finance.
In this paper it is shown that the loss of the local martingale
property is related to a measure
extension problem for the associated F\"ollmer measure. When a solution
exists, the finite variation
part of the projection can be interpreted as the compensator, under the
extended measure, of the
explosion time of the original local martingale. In a topological
setting, this leads to intuitive
conditions under which its paths are singular. The measure extension
problem is then solved
in a Brownian framework, allowing an explicit treatment of several
interesting examples.
\end{abstract}

%
\begin{keyword}[class=AMS]
\kwd[Primary ]{60G44}
\kwd[; secondary ]{60G17}
\kwd{60G30}
\kwd{60G40}
\end{keyword}
\begin{keyword}
\kwd{Filtration shrinkage}
\kwd{local martingales}
\kwd{F\"ollmer measure}
\end{keyword}
\pdfkeywords{60G44, 60G17, 60G30, 60G40, Filtration shrinkage, local martingales, Follmer measure}

\end{frontmatter}

\section{Introduction} \label{SB}

It is a simple fact that the optional projection of a martingale onto a
subfiltration is again a martingale. However, for local martingales the
situation is different, and this was the starting point for F\"ollmer
and Protter in~\cite{FollmerProtter2011}. They consider, among other
things, three-dimensional Brownian motion $B=(B^1,B^2,B^3)$ starting
from $(1,0,0)$, defined on a filtered probability space $(\Omega,\mathcal G,\mathbb G,P)$ where the filtration $\mathbb G=(\mathcal
G_t)_{t\ge0}$ is generated by $B$. In this setting they study optional
projections of the process $N=1/\|B\|$ onto subfiltrations $\mathbb
F^1=(\mathcal F^1_t)_{t\ge0}$ and $\mathbb F^{1,2}=(\mathcal
F^{1,2}_t)_{t\ge0}$ generated by $B^1$ and $(B^1,B^2)$, respectively.
It is well known that $N$, the reciprocal of a BES(3) process, is a
local martingale in~$\mathbb G$. The same turns out to be true for its
optional projection onto $\mathbb F^{1,2}$. However, the optional
projection onto $\mathbb F^1$ \emph{is not a local martingale}. Indeed,
it was shown in~\cite{FollmerProtter2011}, Theorem~5.1, that the equality
%
%
\begin{equation}
\label{eqFP} E^P \bigl[N_t\mid\mathcal
F^1_t \bigr] = 1 + \int_0^t
u_x\bigl(s,B^1_s\bigr)\,dB^1_s
- \int_0^t \frac{1}{s}\,dL^0_s,
\qquad t\ge0,
\end{equation}
holds $P$-a.s., where the function $u$ is given by
\[
u(t,x)= \sqrt{\frac{2\pi}{t}} \exp \biggl( \frac{x^2}{2t} \biggr) \bigl( 1
- \Phi\bigl(|x|/\sqrt{t}\bigr) \bigr)
\]
and $L^0$ is the local time of $B^1$ at level zero. Here $\Phi(\cdot)$
is the standard Normal cumulative distribution function. A superficial
reason for the appearance of the local time is the
nondifferentiability of $u$ at $x=0$, but this is of course highly
specific to this particular example. The main goal of the present paper
is to shed further light on when the optional projection of a general
positive local martingale $N$ fails to be a local martingale, and, when
this is the case, what can be said about the behavior of its finite
variation part. The basic structural result holds for arbitrary
positive local martingales, subject only to a weak regularity condition
on the filtration.

A crucial tool in the analysis is a variant of the \emph{F\"ollmer
measure} $Q_0$ associated with~$N$, whose construction we briefly
review in Section~\ref{SFM}. A~nonuniqueness property of (this
variant of) the F\"ollmer measure leads us to formulate an \emph
{equivalent measure extension problem} (Problem~\ref{MEP}): find an
extension $Q$ of $Q_0$ that is equivalent to $P$ on each $\sigma$-field
of the subfiltration under consideration. When a~solution exists, one
can interpret the finite variation part of the projection of $N$ as the
compensator of a certain stopping time (Theorem~\ref{Tcomp}). This
stopping time is the explosion time of $N$, which may be finite under
the F\"ollmer measure. These developments, valid in full generality,
are carried out in Section~\ref{SMEP}. We then proceed in
Section~\ref{SY} to study filtrations generated by the image under
some continuous
map of the coordinate process $Y$ (we now restrict ourselves to path
space), and take $N$ to be a deterministic function of $Y$. This
additional structure makes it possible to obtain more detailed results
about the points of increase of the finite variation part of the
projection of $N$ (Theorem~\ref{TdL}). As a consequence
(Corollary~\ref{CJY}) we obtain a~simple sufficient condition for its paths to be
singular. Next, in Section~\ref{SMEPsol}, we address the problem of
actually finding a solution to the equivalent measure extension
problem. The setting is now restricted further: the coordinate process
is assumed to be (multidimensional) Brownian motion under~$P$. In this
framework we derive explicit conditions under which the equivalent
measure extension problem can be solved (Theorem~\ref{Tmain11}).
Several illustrating examples are given in Section~\ref{Sex},
including the aforementioned example of F\"ollmer and Protter.

Strict local martingales are fundamental in financial models for asset
pricing bubbles and relative arbitrage; see, for instance, \cite
{HestonLoewensteinWillard2007,JarrowProtterShimbo2010,Fernholz2010fk,Hugonnier2012,Larsson2013}. They also appear in the
so-called Benchmark approach~\cite{PlatenHeath2009}. The role of
filtration shrinkage in this context, in particular the loss of the
local martingale property, is discussed in~\cite{FollmerProtter2011}.
The authors explain how less informed investors may perceive arbitrage
opportunities where there are none; see also~\cite
{JarrowProtter2013}. Applications in credit risk include~\cite
{Cetinetal2004} and~\cite{Jarrow2007uq} (the latter relying on the
very nice theory article~\cite{Sezer2007}). More generally, filtration
shrinkage appears naturally in models with restricted information, and
results such as those obtained in the present paper will be
instrumental for developing models of this type. This is discussed
further in Section~\ref{SApp}, which concludes the paper.

\subsection{Notation}
Let us now fix some notation that will be in force throughout the
paper. $(\Omega,\mathcal G,\mathbb G,P)$ is a filtered probability
space, where the filtration $\mathbb G=(\mathcal G_t)_{t\geq0}$ is the
right-continuous modification of a \emph{standard system}. That is,
$\mathcal G_t = \bigcap_{u>t} \mathcal G_u^o$, where each $\mathcal G_t^o$
is Standard Borel (see Parthasarathy~\cite{Parthasarathy1967},
Definition~V.2.2), and any decreasing sequence of atoms has a nonempty
intersection.\footnote{This means that if $(t_n)_{n\geq0}$ is a
nonnegative increasing sequence, $A_n \in\mathcal G^o_{t_n}$ is an
atom for each $n\geq1$, and $A_n \supset A_{n+1}$, then $\bigcap_n A_n
\neq\varnothing$.} We always assume that $\mathcal G=\mathcal G_\infty
=\bigvee_{t\geq0}\mathcal G_t$. A key example of a standard system is
the filtration generated by all right-continuous paths, allowed to
explode to an absorbing cemetery state in finite time, and with left
limits prior to the explosion time. This example is considered in
detail in \cite{Meyer1972}, and will re-appear in Section~\ref{SY} of
the present paper. Note that \emph{we do not augment $\mathbb G$ with
the $P$-nullsets}---but this does not cause any serious complications,
due to the following result which allows one to pass painlessly between
a filtration and its completion (see also \cite{CarrFisherRuf2012}
for a~discussion of this and related issues).

%
\begin{lemma} \label{Luh}
Let $R$ be a probability measure on $\mathcal G$, and denote by
$(\widebar{ \mathcal G}, \widebar{\mathbb G})$ the augmentation of
$(\mathcal G,\mathbb G)$ with respect to~$R$. Then:
\begin{longlist}[(ii)]
\item[(i)] Every $\widebar{\mathbb G}$~optional (predictable) process
is $R$-indistinguishable from a $\mathbb G$~optional (predictable) process.
\item[(ii)] Every right-continuous $(\mathbb G,R)$~martingale is a
$(\widebar{\mathbb G},R)$~martingale.
\end{longlist}
\end{lemma}

\begin{pf}
Part (i) is Lemma~7 in Appendix~1 of~\cite{DellacherieMeyer1982}.
Part~(ii) follows from Theorem~IV.3 in the same reference.
\end{pf}

Next, let $N$ be a local martingale on $(\Omega,\mathcal G,\mathbb G,
P)$ that is c\`adl\`ag, strictly positive and satisfies $N_0=1$,
$P$-a.s. Define stopping times
\[
\tau_n = n\wedge\inf \{ t\geq0\dvtx  N_t \ge n \}, \qquad
\tau = \lim_{n\to\infty} \tau_n.
\]
Since $N$ is a local martingale under $P$, and hence does not explode
in finite time, we have $P(\tau<\infty)=0$. However, there may be
$P$-nullsets on which $\tau$ is finite---in particular this is the case
when $N$ is a strict local martingale, as will become clear when we
discuss the F\"ollmer measure.

The reciprocal of $N$ will play a sufficiently important role that it
merits its own notation. We thus define a process $M$ by
%
%
\begin{equation}
\label{eqM} M_t = \frac{1}{N_t} {\mathbf1_{\{\tau> t\}}},
\end{equation}
whenever $N_t>0$, and $M_t=0$ otherwise ($N_t$ will never be zero under
any measure considered in the sequel).

Finally, note that $\mathcal G_{\tau-} = \bigvee_{n\geq1} \mathcal
G_{\tau_n}$, see, for instance, \cite{DellacherieMeyer1978}, Theorem~IV.56(d).

\section{The F\"ollmer measure} \label{SFM}

Following similar ideas as in~Delbaen and\break  Schachermayer \cite{DelbaenSchachermayer1995} and Pal and Protter~\cite
{PalProtter2010}, which originated with the paper by F\"ollmer~\cite
{Follmer1972} (who in turn was inspired by Doob~\cite{Doob1957}), we
can construct a new probability $Q_0$ on $\mathcal G_{\tau-}$ as
follows. For each $n\geq1$, the stopped process $N^{\tau
_n}=(N_{t\wedge
\tau_n})_{t\geq0}$ is a strictly positive uniformly integrable
martingale, so we may define a probability $Q_n\sim P$ on $\mathcal
G_{\tau_n}$ by $dQ_n = N_{\tau_n}\,dP$. The optional stopping theorem and
uniform integrability yield
\[
N_{\tau_n} = N_{\tau_n}^{\tau_{n+1}} = E^P \bigl[
N^{\tau
_{n+1}}_\infty \mid\mathcal G_{\tau_n} \bigr] =
E^P [ N_{\tau_{n+1}} \mid\mathcal G_{\tau_n} ].
\]
The measures $(Q_n)_{n\geq1}$ thus form a consistent family. Next, by
Remark~6.1 in the Appendix of~\cite{Follmer1972}, $(\mathcal G_{\tau
_n-})_{n\geq1}$ is a standard system, so Parthasarathy's extension
theorem (Theorem~V.4.2 in~\cite{Parthasarathy1967}) applies: there
exists a probability measure $Q_0$ on $\mathcal G_{\tau-}$ that
coincides with $Q_n$ on $\mathcal G_{\tau_n-}$, for each $n$.

\textit{From now on}, $Q_0$ \textit{will denote the measure on} $\mathcal G_{\tau-}$
\textit{obtained from} $P$ \textit{in this way}.

Here is the key point: $Q_0$ is only defined on $\mathcal G_{\tau-}$,
not on all of $\mathcal G$. There are typically many ways in which
$Q_0$ can be extended to a measure $Q$ on $\mathcal G$, and we will see
that the choice of extension is crucial in the context of filtration
shrinkage. In particular, the existence of an extension with certain
properties is intimately connected with the behavior of the optional
projection of $N$ (under $P$) onto smaller filtrations $\mathbb
F\subset\mathbb G$.

The following lemma shows that no matter which extension $Q$ one
chooses, $M$ defined in (\ref{eqM}) is always the density process
relative to~$P$. In particular it is a (true) $P$ martingale.

%
\begin{lemma} \label{Lfm1}
Suppose $Q$ is an extension of $Q_0$ to all of $\mathcal G$. Then, for
each $t\ge0$,
\[
M_t = \frac{dP}{dQ} \bigg|_{\mathcal G_t} \qquad Q\mbox{-a.s.}
\]
\end{lemma}

\begin{pf}
The argument is well-known. Fix $t\geq0$ and pick $A\in\mathcal G_t$.
Using that $M_t=0$ for $t\geq\tau$, monotone convergence and the fact
that $M_{t\wedge\tau_n}=\frac{dP}{dQ} |_{\mathcal G_{t\wedge
\tau
_n}}$ (which relies on the strict positivity of $N$), we obtain
\begin{eqnarray*}
E^Q [ M_t \mathbf1_A ] &=& E^Q [
M_t \mathbf1_{A\cap\{
\tau>t\}} ] = \lim_{n\to\infty}
E^Q [ M_t \mathbf1_{A\cap\{\tau_n>t\}} ]
\\
&=& \lim_{n\to\infty} E^Q [ M_{t\wedge\tau_n}
\mathbf1_{A\cap
\{\tau
_n>t\}} ] = \lim_{n\to\infty} P\bigl(A\cap\{
\tau_n>t\}\bigr)
\\
&=& P\bigl(A\cap\{\tau>t\}\bigr).
\end{eqnarray*}
Since $P(\tau>t)=1$, the right-hand side equals $P(A)$, as claimed.
\end{pf}

If $N$ is a strict local martingale under $P$, then $Q(\tau<\infty)>0$,
and vice versa. To see this, simply write
\[
Q(\tau>t) = E^Q[ M_t N_t ] =
E^P[ N_t ],
\]
which is strictly less than one for some $t>0$ if and only if $N$ is a
strict local martingale. Our focus will be on this case, and in
particular this means that $P$ and $Q$ cannot be equivalent. In fact,
they may even be singular, which is the case if $Q(\tau<\infty)=1$. On
the other hand, Lemma~\ref{Lfm1} guarantees that we always have \emph
{local absolute continuity}: for each $t$, $Q|_{\mathcal G_t }\ll
P|_{\mathcal G_t}$. ``Global'' absolute continuity, $Q\ll P$, holds
when $(M_t)_{t\geq0}$ is uniformly integrable under~$P$.

The following simple but useful result shows that although $N$ may
explode under $Q$, it does so continuously---it does not jump to infinity.

%
\begin{lemma} \label{LMtau}
On $\{\tau<\infty\}$, the equality $M_{\tau-}=0$ holds $Q_0$-a.s.
\end{lemma}

\begin{pf}
First, note that $\tau_n<\tau$, $Q_0$-a.s. Indeed, since $N^{\tau_n}$
is a martingale under $P$ and $\tau_n$ is bounded by construction,
\[
Q_0(\tau_n<\tau) = E^{Q_0}[M_{\tau_n}N_{\tau_n}]
= E^P[N_{\tau_n}] = 1.
\]
Now, on $\{N_{\tau-}<\infty$ and $\tau<\infty\}$ there exists a
(large) $n$ such that $\tau_n=\tau$. Hence
\[
Q_0 ( N_{\tau-}<\infty\mbox{ and }\tau< \infty ) \le
\sum_{n\ge1} Q_0 (\tau_n = \tau
) = 0.
\]
Therefore $Q_0( M_{\tau-}>0$ and $\tau< \infty)=0$,
as claimed.
\end{pf}

Let us mention that the construction of $P$ from $Q$ is
straightforward: assuming that $M$ is a $Q$~martingale, the measures
$P_n$ on $\mathcal G_n$ given by $dP_n = M_n\,dQ$ form a~consistent
family, extendable to a measure $P$ on $\mathcal G$ using
Parthasarathy's theorem. Local absolute continuity is immediate, and
``global'' absolute continuity holds when $M$ is uniformly integrable.
Note that $P$ only depends on the behavior of $Q$ on $\mathcal G_{\tau
-}$, since $P(\tau=\infty)=1$.

We finally comment on how the question of uniqueness has been treated
previously in the literature. In F\"ollmer's original paper~\cite
{Follmer1972}, a measure is constructed on the product space
$(0,\infty
]\times\Omega$, specifically on the predictable $\sigma$-field. This
measure assigns zero mass to the stochastic interval $(\tau,\infty]$,
which is key to obtaining uniqueness. On the other hand, neither~\cite
{DelbaenSchachermayer1995} nor~\cite{PalProtter2010} consider the
product space, but work directly on $\Omega$. However, $N$ is now taken
to be the coordinate process, with $+\infty$ as an absorbing state.
Hence there is ``no more randomness'' contained in the probability
space after $\tau$, which gives uniqueness of~$Q$. In the recent
paper~\cite{Kardarasetal2011}, Kardaras et al.~consider more general
probability spaces, and in particular discuss the question of
nonuniqueness. A construction of the F\"ollmer measure when the local
martingale $N$ may reach zero is discussed in~\cite{CarrFisherRuf2012}.

\section{Filtration shrinkage and a measure extension problem} \label{SMEP}

Consider now a filtration $\mathbb F=(\mathcal F_t)_{t\geq0}$ with
$\mathcal F_t\subset\mathcal G_t$, $t\geq0$, assumed to be the
right-continuous modification of a standard system. Again, completeness
is not assumed. The focus of this paper is on the object
\[
E^P [ N_t \mid\mathcal F_t ], \qquad t\ge0,
\]
interpreted as the optional projection of $N$ onto $\mathbb F$ (see below).

We suppose that $Q$ is an extension of $Q_0$ as discussed in
Section~\ref{SFM}. By Theorem~6 in Appendix~1 of~\cite
{DellacherieMeyer1982}, optional projections of $N$ and $M$ exist
under $P$~and~$Q$, respectively. When we write $E^P [ N_t \mid
\mathcal F_t  ]$ and $E^Q [ M_t \mid\mathcal F_t  ]$ we
always refer to these optional projections. Moreover, the projections
almost surely have c\`adl\`ag paths. This follows from the c\`adl\`ag
property of the optional projections onto the augmentation of $\mathbb
F$ (under $P$, resp., $Q$), together with Lemma~\ref{Luh} and the
uniqueness of the projection. A subtlety arises here: the optional
projection of $N$ under $P$ is unique up to a $P$-evanescent set.
However, this set need not be $Q$-evanescent. We will return to this
issue momentarily; see Remark~\ref{R1} below. First, however, we
introduce the following \emph{equivalent measure extension problem},
which turns out to be intimately related to properties of the optional
projections.

%
\begin{problem}[(Equivalent measure extension problem)]\label{MEP}
Given the probability $Q_0$ constructed in Section~\ref{SFM}, and the
subfiltration $\mathbb F\subset\mathbb G$, find a probability $Q$ on~$(\Omega,\mathcal G)$ such that:
\begin{longlist}[(ii)]
\item[(i)] $Q = Q_0$ on $\mathcal G_{\tau-}$;
\item[(ii)] The restrictions of $P$ and $Q$ to $\mathcal F_t$ are
equivalent for each $t\ge0$.
\end{longlist}
\end{problem}

%
\begin{remark}\label{R1}
The issue of $Q$-nonuniqueness of the optional projection of $N$ under
$P$ is resolved if $Q$ solves the equivalent measure extension problem.
Indeed, if $N'$ and $N''$ are two versions of $E^P[N_t\mid\mathcal F_t]$,
then for every $T\geq0$, $(N'_t)_{t\leq T}$ and $(N''_t)_{t\leq T}$
coincide on a set $A_T$ with $P(A_T)=1$. But $A_T\in\mathcal F_T$, so
$Q(A_T)=1$ as well. It follows that $N'=N''$~$Q$-a.s.
\end{remark}

%
\begin{remark}
If $N$ is a true martingale, then $Q_0(\tau=\infty)=1$, and the
equivalent measure extension problem has a trivial solution: take
$Q=Q_0$. Of course, for us the interesting case is when $N$ is a strict
local martingale.
\end{remark}

The following result clarifies the link between the equivalent measure
extension problem and filtration shrinkage.

%
\begin{lemma} \label{Lme1}
Fix $t\ge0$, and let $Q$ be any extension to $\mathcal G$ of $Q_0$.
Then the following are equivalent:
\begin{longlist}[(iii)]
\item[(i)] The restrictions of $P$ and $Q$ to $\mathcal F_t$ are equivalent.
\item[(ii)] $E^Q[M_t \mid\mathcal F_t] > 0$, $Q$-a.s.
\item[(iii)] $Q(\tau>t\mid\mathcal F_t) > 0$, $Q$-a.s.
\end{longlist}
If either of the above conditions holds, then
%
%
\begin{equation}
\label{eqme1} Q(\tau>t \mid\mathcal F_t) = E^Q
[M_t\mid\mathcal F_t ] E^P [N_t
\mid\mathcal F_t ], \qquad P\mbox{- and }Q\mbox{-a.s.}
\end{equation}
\end{lemma}

\begin{pf}
The equivalence of (i) and (ii) is immediate, since $E^Q[M_t\mid
\mathcal F_t]$ is the Radon--Nikodym density of $P|_{\mathcal F_t}$
with respect to $Q|_{\mathcal F_t}$. We now prove that (ii) and
(iii) are equivalent. To this end, let $A=\{E^Q[M_t\mid\mathcal
F_t]=0\} \in\mathcal F_t$. In the following, inclusions and equalities
are understood up to $Q$-nullsets. We have
\[
E^Q [ \mathbf1_AM_t ] = E^Q
\bigl[ \mathbf1_AE^Q[ M_t \mid \mathcal
F_t] \bigr] = 0,
\]
so $M_t=0$ on $A$. Hence $\tau\leq t$ on $A$, so
\[
E^Q \bigl[ \mathbf1_A Q(\tau> t \mid\mathcal
F_t) \bigr] = Q \bigl( A\cap \{\tau>t\} \bigr) = 0
\]
and we deduce that $Q(\tau> t \mid\mathcal F_t) = 0$ on $A$. The
reverse inclusion, $\{Q(\tau>t\mid\mathcal F_t) = 0\} \subset A$, is
proved similarly, and this gives (ii)${}\Longleftrightarrow{}$(iii). To\vspace*{1pt}
prove formula~(\ref{eqme1}), we use that $P(\tau>t)=1$, Bayes' rule
and the fact that $\frac{dP}{dQ} |_{\mathcal G_t}=M_t$ (Lemma~\ref{Lfm1}) to get
\begin{eqnarray*}
E^P [ N_t \mid\mathcal F_t ] &=& E^P \biggl[ \frac{1}{M_t} {\mathbf1_{\{\tau>t\}}} \biggm|
\mathcal F_t \biggr]
\\
&=& \frac{E^Q [ M_t (1/M_t) {\mathbf1_{\{\tau>t\}}} \mid
\mathcal F_t
] }{E^Q [ M_t \mid\mathcal F_t  ]} = \frac{Q(\tau>t \mid\mathcal F_t)}{E^Q [ M_t \mid\mathcal F_t
]}.
\end{eqnarray*}
This gives the desired conclusion.
\end{pf}

A solution $Q$ to the equivalent measure extension problem, when it
exists, leads to an interpretation of the finite variation part of the
$P$~optional projection onto $\mathbb F$ of the local martingale~$N$.
To see how, let us define
\[
Z_t = Q(\tau>t\mid\mathcal F_t). %
\]
This is an $(\mathbb F,Q)$~supermartingale, therefore it has a c\`adl\`
ag modification since $\mathbb F$ is right-continuous. We choose this
modification when defining $Z$. If in addition it is strictly positive,
it has a unique multiplicative Doob--Meyer decomposition
%
%
\begin{equation}
\label{eqZdec} Z_t = e^{-\Lambda_t}K_t,
\end{equation}
where $\Lambda$ is nondecreasing and predictable with $\Lambda_0=0$,
and $K$ is an $(\mathbb F,Q)$~local martingale with $K_0=1$, see
Theorem~II.8.21 in~\cite{Jacod2003fk}.

%
\begin{proposition} \label{PNdec}
Suppose $Q$ is a solution to the equivalent measure extension problem
(Problem~\ref{MEP}). Then $E^P[N_t\mid\mathcal F_t]$ is an $(\mathbb
F,P)$~supermartingale, with multiplicative decomposition
\[
E^P[N_t\mid\mathcal F_t] = e^{-\Lambda_t}
U_t,
\]
where $\Lambda$ is as in (\ref{eqZdec}) and $U$ is an $(\mathbb
F,P)$~local martingale. It is a true martingale provided $K$ in (\ref
{eqZdec}) is a true $(\mathbb F,Q)$ martingale.
\end{proposition}

\begin{pf}
If $Q$ solves the equivalent measure extension problem, Lemma~\ref{Lme1} implies that $Z$ is strictly positive, so that the
decomposition (\ref{eqZdec}) exists. It also implies that
\[
E^Q[M_t\mid\mathcal F_t] e^{\Lambda_t}
E^P[N_t\mid\mathcal F_t] = K_t
\]
is an $(\mathbb F,Q)$~local martingale. Since $E^Q[M_t\mid\mathcal
F_t]=\frac{dP}{dQ} |_{\mathcal F_t}$ it follows that $e^{\Lambda_t}
E^P[N_t\mid\mathcal F_t]$ is an $(\mathbb F,P)$~local martingale, and a
true martingale if $K$ is. Denoting this process by $U$ yields the
claimed decomposition.
\end{pf}

%
\begin{remark}
The fact that $E^P[N_t\mid\mathcal F_t]$ is an $(\mathbb
F,P)$~supermartingale also follows from Theorem~2.3 in~\cite
{FollmerProtter2011}. Moreover, it is of Class~(DL) whenever $U$ is
a~martingale, and by Proposition~\ref{PNdec} this holds if $K$ is a
martingale. A simple sufficient condition for this is that $\Lambda$
does not increase too rapidly, in the sense that $E^Q[e^{\Lambda
_t}]<\infty$ for each $t\ge0$. Indeed, in this case $E^Q[\sup_{s\le t}
K_s]<\infty$ since $Z\le1$, implying the martingale property.
\end{remark}

The following corollary is simple but nonetheless informative, since it
shows that the equivalent measure extension problem certainly does not
always have a~solution.

%
\begin{corollary} \label{Cns}
Suppose $N$ is a strict $(\mathbb G,P)$~local martingale. If
$E^P[N_t\mid\mathcal F_t]$ is again an $(\mathbb F,P)$~local
martingale, then the equivalent measure extension problem has no solution.
\end{corollary}

\begin{pf}
Suppose a solution exists. Then, since $E^P[N_t\mid\mathcal F_t]$ is a
local martingale, the process $\Lambda$ in Proposition~\ref{PNdec} is
identically zero, so that $K$ is bounded and hence a true
martingale.\vadjust{\goodbreak}
Therefore $E^P[N_t\mid\mathcal F_t]=U_t$ is a true martingale by
Proposition~\ref{PNdec}. It follows that $E^P[N_t]=E^P[E^P[N_t\mid
\mathcal F_t]]=1$ for all $t\geq0$, contradicting that $N$ is a strict
local martingale.
\end{pf}

We can now establish our first main result. It shows that the finite
variation part $\Lambda$ appearing when $N$ is projected onto the
smaller filtration can be interpreted as the \emph{predictable
compensator of} $\tau$, viewed in the appropriate filtration. The key
step is an application of the Jeulin--Yor theorem from the theory of
filtration expansions.

%
\begin{theorem} \label{Tcomp}
Let $\mathbb F^{\tau}$ be the progressive expansion of $\mathbb F$ with
$\tau$, that is, the smallest filtration that contains $\mathbb F$,
satisfies the usual hypotheses (with respect to~$Q$) and makes $\tau$ a
stopping time. If $Q$ solves the equivalent measure extension problem, then:
\begin{longlist}[(ii)]
\item[(i)] the process
\[
{\mathbf1_{\{\tau\leq t\}}} - \Lambda_{t\wedge\tau} %
\]
is an $(\mathbb F^{\tau},Q)$ uniformly integrable martingale, where
$\Lambda$ is as in (\ref{eqZdec});
\item[(ii)] $\tau$ is not $\mathbb F^{\tau}$-predictable, provided
$Q(\tau<\infty)>0$.
\end{longlist}
\end{theorem}

\begin{pf}
The proof uses stochastic integration, which can be developed without
assuming the usual hypotheses; see, for instance, Chapter~I.4 in~\cite
{Jacod2003fk}. Alternatively, one may apply Lemma~\ref{Luh} to first
pass to the $Q$-completion $\widebar{\mathbb F}$ of $\mathbb F$
without losing the semimartingale property of any of the processes
involved, carry out the computations there and then go back to $\mathbb
F$ at the cost of changing things on a \mbox{$Q$-}nullset.

The integration by parts formula yields
\[
Z_t = 1 + \int_0^t
e^{-\Lambda_{s-}}\,dK_s + \bigl[e^{-\Lambda},K\bigr]_t
- \int_0^t e^{-\Lambda_{s-}} K_{s-}\,d
\Lambda_s. %
\]
By Yoeurp's lemma (\cite{DellacherieMeyer1982}, Theorem~VII.36),
$[e^{-\Lambda},K]$ is a local martingale, so we have the additive
Doob--Meyer decomposition $Z_t = \mu_t - a_t$, where
\[
\mu_t = 1+\int_0^t
e^{-\Lambda_{s-}} \,dK_s + \bigl[e^{-\Lambda},K\bigr]_t
\quad\mbox{and}\quad a_t = \int_0^t
Z_{s-} \,d\Lambda_s. %
\]
The Jeulin--Yor theorem (see Theorem~1.1 in~\cite{GuoZeng2008}, or
the original paper~\cite{JeulinYor1978}), which is applicable in view
of Lemma~\ref{Luh}, shows that the process
\[
{\mathbf1_{\{\tau\leq t\}}} - \int_0^{t\wedge\tau}
\frac{1}{Z_{s-}}\,da_s %
\]
is an $(\mathbb F^{\tau},Q)$ martingale, and indeed uniformly
integrable since it is the martingale part of the Doob--Meyer
decomposition of the Class~(D) submartingale ${\mathbf1_{\{\tau\le
\cdot\}}}$.
Substituting for $da_s$ yields~(i).

To prove (ii), assume for contradiction that there is a strictly
increasing sequence of $\mathbb F^{\tau}$~stopping times $\rho_n$ such
that $\lim_n \rho_n = \tau$. By the lemma on page 370 in~\cite
{Protter2005}, there are $\mathbb F$~stopping times $\sigma_n$ such
that $\sigma_n\wedge\tau=\rho_n\wedge\tau$. But since $\rho
_n<\tau$,
this yields $\sigma_n=\rho_n$. It follows that $\tau$ is $Q$-a.s.~equal
to an $\mathbb F$~stopping time, implying that
\[
Q(\tau>t\mid\mathcal F_t) = {\mathbf1_{\{\tau>t\}}} \qquad Q
\mbox{-a.s.} %
\]
By Lemma~\ref{Lme1} this contradicts the assumption that $Q$ solves
the equivalent measure extension problem, since by hypothesis $Q(\tau
<\infty)>0$.
\end{pf}

The significance of Theorem~\ref{Tcomp} is that it shows \emph{when}
the $(\mathbb F,P)$~supermartingale $E^P[N_t\mid\mathcal F_t]$ loses
mass: it happens exactly when the compensator of $\tau$ increases, that
is, when there is an increased probability, conditionally on $\mathbb
F$, that $\tau$ has already happened. This corresponds to a kind of
smoothing over time of the sets $\{\tau\leq t\}$ when we pass to the
smaller filtration~$\mathbb F$. This smoothing is necessary to make the
restrictions of $P$ and $Q$ equivalent, since $\{\tau\leq t\}$ is
$P$-null but not necessarily $Q$-null.

\section{The finite variation term in a topological setting} \label{SY}

In this section we specialize the previous setup as follows. Let $E$ be
a locally compact topological space with a countable base, and define
$E_\Delta= E\cup\{\Delta\}$, where $\Delta\notin E$ is an isolated
point. We take $\Omega$ to be all right-continuous paths $\omega\dvtx \mathbb R_+ \to E_\Delta$ that are absorbed at $\Delta$
[i.e., if $\omega(s)=\Delta$ then $\omega(t)=\Delta$ for all $t\ge s$] and have
left limits on $(0,\zeta(\omega))$, where the \emph{absorption time}
$\zeta$ is defined by
\[
\zeta(\omega):=\inf\bigl\{t\ge0\dvtx  \omega(t)=\Delta\bigr\}.
\]
Let $Y_t(\omega)=\omega(t)$ be the coordinate process, and define
$\mathcal G_t^o=\sigma(Y_s\dvtx s\le t)$. Then $\mathbb G^o=(\mathcal
G_t^o)_{t\ge0}$ is a standard system; see the Appendix in \cite
{Follmer1972}. We let $\mathbb G$ be the right-continuous modification
of $\mathbb G^o$, and $\mathcal G=\bigvee_{t\ge0}\mathcal G_t$.

Next, consider a function $h\dvtx E_\Delta\to[0,\infty)$ that is continuous
on $E$ and satisfies $h(Y_0)=1$ $P$-a.s. (In particular, the measure
$P$ is such that, almost surely, $Y$ starts at a point where $h$ equals
one.) Define stopping times $\tau_n=n\wedge\inf\{t\ge0\dvtx h(Y_t)\le
1/n\}$
and $\tau=\lim_{n\to\infty}\tau_n$. We assume that the $P$~local
martingale $N$ is given by
\[
N_t = \frac{1}{h(Y_t)}{\mathbf1_{\{\tau>t\}}}.
\]
Note how this imposes restrictions on the interplay between~$P$
and~$h$: they have to be such that $N$ is indeed a local martingale.
Note also that given this setup, the definitions of $\tau_n$ and $\tau$
are consistent with those given in Section~\ref{SB}. Furthermore, we
let $M$ be given by~(\ref{eqM}), and $Q_0$ as in Section~\ref{SFM}.

To describe the smaller filtration $\mathbb F$, let $D$ be a metrizable
topological space, and let
\[
\pi\dvtx  E \to D
\]
be a continuous map. We define $D_\Delta= D\cup\{\Delta\}$ (assuming
without loss of generality that $\Delta\notin D$), and set $\pi
(\Delta
)=\Delta$. If $d(\cdot,\cdot)$ is a metric on $D$, we extend it
$D_\Delta$ by setting $d(x,\Delta)=d(\Delta,x)=\infty$ for $x\in D$,
and $d(\Delta,\Delta)=0$. Next, define a $D_\Delta$-valued process
$X$ by
\[
X_t = \pi(Y_t), \qquad t\ge0.
\]
It is clear that $X$ is $\mathbb G$-adapted. The filtration $\mathbb
F=(\mathcal F_t)_{t\ge0}$, given by
\[
\mathcal F_t = \bigcap_{u>t}
\sigma(X_s\dvtx  s\le u),
\]
is therefore a subfiltration of $\mathbb G$, right-continuous, but not
augmented. The structure imposed by the above conditions (and the
flavor of the main theorem below) is primarily of a topological nature,
which motivates the title of this section.

Recall the multiplicative decomposition $E^P[N_t\mid\mathcal F_t] =
e^{-\Lambda_t} U_t$ of the positive $(\mathbb F,P)$ supermartingale
$E^P[N_t\mid\mathcal F_t]$. The finite variation part $\Lambda$ is
related to $\tau$ by Proposition~\ref{PNdec}, provided the equivalent
measure extension problem has a solution. In the particular setting of
the present section, we can say the following about the points of
increase of~$\Lambda$:

%
\begin{theorem} \label{TdL}
Assume that $Q$ is a solution to the equivalent measure extension
problem, and let $\Lambda$ be as in~(\ref{eqZdec}). Then the random
measure $d\Lambda_t$ is supported on the set $\{t\dvtx  X_{t-} \in
\overline{D_0}\}$, where $\overline{D_0}$ is the closure in $D$ of
\[
D_0 = \pi\circ h^{-1}\bigl(\{0\}\bigr) = \bigl\{x\in D\dvtx  x=
\pi(y)\mbox{ for some } y\in E \mbox{ with } h(y)=0 \bigr\}.
\]
\end{theorem}

The proof requires two lemmas.

%
\begin{lemma} \label{LdL1}
We have $\pi(Y_{\tau-}) \in D_0$ on $\{\tau<\infty\}$, $Q_0$-a.s.
\end{lemma}

\begin{pf}
To show that $\pi(Y_{\tau-}) \in D_0$, one must find $y\in E$ with
$h(y)=0$ such that $\pi(y)=\pi(Y_{\tau-})$. But $h(Y_{\tau
-})=M_{\tau
-}=0$ on $\{\tau<\infty\}$ by Lemma~\ref{LMtau}, so we may take
$y=Y_{\tau-}$.
\end{pf}

%
\begin{lemma} \label{LdL2}
For any $\mathbb F$~stopping time $\rho$, the equality $Z_\rho=Q(\tau
>\rho\mid\mathcal F_\rho)$ holds on $\{\rho<\infty\}$, $Q$-a.s.
\end{lemma}

\begin{pf}
We need to show that $E^Q[Z_\rho\mathbf1_{A\cap\{\rho<\infty\}
}]\vspace*{2pt}=Q(A\cap\{
\tau>\rho\})$ for every\vadjust{\goodbreak} $\mathbb F$-stopping time $\rho$ and every
$A\in
\mathcal F_\rho$. This clearly holds when $\rho$ is constant. Suppose
now that $\rho$ is of the form
%
%
\begin{equation}
\label{eqL2dL} \rho= \sum_{i=1}^n
t_i \mathbf1_{A_i},
\end{equation}
where $t_i\in[0,\infty]$, $A_i\in\mathcal F_{t_i}$, and the $A_i$
constitute a partition of $\Omega$. Then
\begin{eqnarray*}
E^Q[Z_\rho\mathbf1_A] &=& \sum
_{i=1}^n E^Q [Z_{t_i}\mathbf
1_{A_i\cap
A} ]
\\
&=& \sum_{i=1}^n Q \bigl(A\cap
A_i\cap\{\tau>t_i\} \bigr)
\\
&=& Q \bigl(A\cap\{\tau>\rho\} \bigr),
\end{eqnarray*}
where the second equality used that $A_i\cap A\in\mathcal F_{t_i}$ and
that the result holds for constant times. Finally, let $\rho_n$ be a
decreasing sequence of stopping times of the form (\ref{eqL2dL}) with
$\lim_n\rho_n=\rho$. Right-continuity together with bounded convergence
and the result applied to $\rho_n$ now yields the statement of the lemma.
\end{pf}

\begin{pf*}{Proof of Theorem~\ref{TdL}}
Let $0<\rho\le\sigma$ be bounded $\mathbb F$~stopping times such that
$X_-\notin D_0$ on $[\rho, \sigma)$. We claim that $\Lambda_\sigma
-\Lambda_\rho=0$, $Q$-a.s. To prove this, first write
%
%
\begin{equation}
\label{eqdL0} \Lambda_\sigma- \Lambda_\rho= (
\Lambda_\sigma- \Lambda_\rho ){\mathbf1_{\{\tau \le\sigma\}}} + (
\Lambda_{\sigma\wedge\tau} - \Lambda_{\rho\wedge\tau}){\mathbf1_{\{\sigma<\tau\}}}.
\end{equation}
By continuity of $\pi$ and the choice of $\rho$ and $\sigma$, we have
\[
\pi(Y_{\tau-}) = X_{\tau-} \notin D_0\qquad\mbox{on }
\{\rho<\tau \le\sigma\}.
\]
But according to Lemma~\ref{LdL1}, $\pi(Y_{\tau-})\in D_0$ on $\{
\tau
<\infty\}$, $Q$-a.s., so we get
%
%
\begin{equation}
\label{eqdL1} Q (\rho<\tau\le\sigma ) \le Q \bigl(\pi(Y_{\tau-}) \notin
D_0, \tau<\infty \bigr) = 0.
\end{equation}
Next, consider the filtration $\mathbb F^{\tau}$ described in
Theorem~\ref{Tcomp}. By that theorem,
\[
{\mathbf1_{\{\tau\leq t\}}} - \Lambda_{t\wedge\tau} %
\]
is an $(\mathbb F^{\tau},Q)$~martingale. Since $\rho$ and $\sigma$ are
also $\mathbb F^{\tau}$~stopping times, the martingale property and the
optional sampling theorem, together with~(\ref{eqdL1}), yield
\[
E^Q [ \Lambda_{\sigma\wedge\tau} - \Lambda_{\rho\wedge\tau
} ] =
E^Q [ {\mathbf1_{\{\rho<\tau\leq\sigma\}}} ] = 0.
\]
Since $\Lambda$ is nondecreasing, we deduce that $\Lambda_{\sigma
\wedge
\tau} - \Lambda_{\rho\wedge\tau} = 0$, $Q$-a.s. Using this in the
decomposition (\ref{eqdL0}), we obtain
\[
\Lambda_\sigma- \Lambda_\rho= (\Lambda_\sigma-
\Lambda_\rho ){\mathbf1_{\{\tau \le\rho\}}}.
\]
This implies that $\tau\le\rho$ on the $\mathcal F_\sigma$-measurable
set $\{\Lambda_\sigma- \Lambda_\rho> 0 \}$. In conjunction with
Lemma~\ref{LdL2}, this gives the equalities
\[
Z_\sigma{\mathbf1_{\{\Lambda_\sigma- \Lambda_\rho> 0\}}} = Q \bigl( \{\tau> \sigma\}\cap\{
\Lambda_\sigma- \Lambda_\rho> 0 \} \mid\mathcal
F_\sigma \bigr) = 0, \qquad Q\mbox{-a.s.}
\]
But $Q$ solves the equivalent measure extension problem, so $Z$ is
strictly positive, $Q$-a.s. Therefore $Q(\Lambda_\sigma- \Lambda
_\rho
> 0)=0$, and we have finally proved our claim that $\Lambda_\sigma
-\Lambda_\rho=0$, $Q$-a.s.

Now, choose a metric $d(\cdot,\cdot)$ on $D$ compatible with its
topology. For any subset $A\subset D$ and any $x\in D$, define the
distance from $x$ to $A$ by
\[
\operatorname{dist}(x,A) = \inf\bigl\{ d\bigl(x,x'\bigr)\dvtx
x'\in A\bigr\}.
\]
It is easy to check that $\operatorname{dist}(\cdot,A)$ is continuous (even
Lipschitz), and in particular measurable. For each rational number
$r>0$ and natural number $n>r$, define stopping times
\begin{eqnarray*}
\rho_r &=& \cases{r, &\quad if $\operatorname{dist}(X_{r-},D_0)
> 0$,
\cr
\infty, &\quad otherwise,}
\\
\rho_{r,n} &=& n\wedge\rho_r,
\\
\sigma_r &=& n\wedge\inf\bigl\{ t> \rho_{r,n}\dvtx
\operatorname{dist}(X_{t-}, D_0) = 0\bigr\}.
\end{eqnarray*}
Then the stopping times $\rho_{r,n}$ and $\sigma_{r,n}$ are all
bounded, and it is a simple matter to check the inclusion
\[
[\rho_{r,n},\sigma_{r,n})\subset\bigl\{\operatorname{dist}(X_-,D_0)>0
\bigr\}.
\]
Moreover, if for some $(t,\omega)$ with $t>0$ it holds that $\operatorname{dist}(X_{t-}(\omega), D_0)>0$, then by left-continuity of $X_-(\omega
)$ and continuity of the distance function, there is a rational $r>0$
such that $r\le t$ and for all $s\in[r,t]$ we have $\operatorname{dist}(X_{s-},D_0)>0$. Thus for any $n>t$, we have $(t,\omega)\in
[\rho
_{r,n},\sigma_{r,n})$, and we deduce
\[
\mathop{\bigcup_{r\in\mathbb Q, r>0}}_{n\in\mathbb N, n>r} [\rho
_{r,n}, \sigma_{r,n}) = \bigl\{ \operatorname{dist}(X_-,D_0)>0
\bigr\}.
\]
By the first part of the proof, $d\Lambda_t$ does not charge any of the
countably many intervals in the union on the left-hand side. It follows
that $d\Lambda_t$ is supported on the set $\{\operatorname{dist}(X_-, D_0) =
0\}$, which coincides with $\{X_-\in\overline{D_0}\}$.
\end{pf*}

%
\begin{remark}
Since $h$ is continuous and $D_0=\pi\circ h^{-1}(\{0\})$, $D_0$ is a
closed set in $D$ if $\pi$ is a closed map. An example is when $\pi$ is
a linear map on $\mathbb R^q$, and $D=\pi(\mathbb R^q)$. This case is
discussed in Section~\ref{SMEPsol}.
\end{remark}

We can now give a simple sufficient condition for $\Lambda$ to have
singular paths, as in the example studied by F\"ollmer and
Protter~\cite{FollmerProtter2011} that was mentioned in the \hyperref[SB]{Introduction}.

%
\begin{corollary} \label{CJY}
Assume $D$ is a subset of~$\mathbb R^k$ for some $k$, and that the law
of~$X_t$ under $Q$ admits a density for almost every $t> 0$. Then, if
$\overline{D_0}$ is a nullset in $\mathbb R^k$, the paths of $\Lambda$
are singular.
\end{corollary}

\begin{pf}
Since $X_s$ has a density for almost every $s$ and $\overline{D_0}$ is
a nullset, Fubini's theorem yields $E^Q[ \int_0^t {\mathbf1_{\{X_s\in
\overline{D_0}\}}} \,ds ] = \int_0^t Q (X_s\in\overline{D_0} ) \,ds = 0$.
Hence $\int_0^t {\mathbf1_{\{X_s\in\overline{D_0}\}}}\,ds=0$, $Q$-a.s.
Thus $\{t\dvtx
X_t\in\overline{D_0}\}$ is a nullset $Q$-a.s., and it contains the
support of $d\Lambda_t$ by Theorem~\ref{TdL}. This proves the claim.
\end{pf}

We finish this section with a result intended to emphasize the
distinction between $\zeta$, the absorption time of the coordinate
process $Y$, and the explosion time $\tau$ of the process~$N$.

%
\begin{proposition} \label{Pncmeps}
The following statements hold:
\begin{longlist}[(ii)]
\item[(i)] Let $Q$ be any extension of $Q_0$ to all of $\mathcal G$.
Then $\tau\le\zeta$ on $\{\tau<\infty\}$, $Q$-a.s.
\item[(ii)] If $Q$ is a solution to the equivalent measure extension
problem and $\tau<\infty$ on $\{\zeta<\infty\}$, $Q$-a.s., then
$Q(\zeta
=\infty)=1$.
\end{longlist}
\end{proposition}

\begin{pf}
Since the coordinate process stops at $\zeta$, it is clear that
$\mathcal G_\infty= \mathcal G_\zeta$. Hence for any stopping time
$\sigma$, $\mathcal G_\sigma= \mathcal G_\sigma\cap\mathcal G_\zeta
\subset\mathcal G_{\sigma\wedge\zeta}\subset\mathcal G_\sigma$, and
thus $\mathcal G_{\sigma\wedge\zeta}=\mathcal G_\sigma$. Applying this
with $\sigma=t$, for any $t\ge0$, we get
\[
M_{t\wedge\zeta} = E^Q [ M_t \mid\mathcal
G_{t\wedge\zeta
} ] = E^Q [ M_t \mid\mathcal
G_t ] = M_t,
\]
showing that $M$ is $Q$-a.s. constant after $\zeta$ (note that this
holds for any martingale). Now, on $\{\zeta<\tau\}$ we have $\inf_{0\le
t\le\zeta}M_t > 0$, and since $M$ is constant after $\zeta$ we have
$\inf_{t\ge0}M_t>0$. Hence $\tau=\infty$, and we deduce~(i).

To prove (ii), first note that $X_\zeta=\pi(Y_\zeta)=\pi(\Delta
)=\Delta$, and that for $t<\zeta$, $X_t\in D$ so that $X_t\ne\Delta$.
The absorption time can therefore alternatively be written
\[
\zeta= \inf\{t\ge0\dvtx  X_t = \Delta\},
\]
showing that $\zeta$ is in fact an $\mathbb F$~stopping time. Our
hypothesis says that $\tau<\infty$ on $\{\zeta<\infty\}$. Hence, by
part~(i) above, $\tau\le\zeta$ on $\{\zeta<\infty\}$. But since
Lemma~\ref{LdL2} implies that $Z_\zeta=Q(\tau>\zeta\mid\mathcal
F_\zeta
)$ on this set, we deduce that $Z_\zeta=0$ on $\{\zeta<\infty\}$. Now,
$Q$ solves the equivalent measure extension problem, so in order to
avoid a contradiction we must have $Q(\zeta=\infty)=1$.
\end{pf}

%
\begin{remark}
If $N$ itself is the coordinate process, then $\tau$ and $\zeta$
coincide, as is the case, for example, in \cite{DelbaenSchachermayer1995}. In this case part (ii) of the above
proposition implies that the equivalent measure extension problem lacks
a solution \emph{for any subfiltration}~$\mathbb F$ of the type
discussed in this section. At first glance, this seems to imply that
the proposition is incorrect: let, for instance, $\mathbb F$ be the
trivial filtration---then $P$ itself is a solution to the equivalent
measure extension problem. The issue here is that the trivial
filtration is not of the type introduced above, since we assumed that
$\pi(\Delta)=\Delta\ne\pi(y)$ for $y\in E$. In particular, $\zeta
$ is
not a stopping time for the trivial filtration, and this breaks the
proof of part (ii). On the other hand, part~(i) remains correct
even if we allow $\pi(\Delta)$ to lie in $D$, and also part (ii)
remains correct as long as we additionally assume that $\zeta$ is an
$\mathbb F$~stopping time.
\end{remark}

\section{Solving the equivalent measure extension problem} \label{SMEPsol}

So far we have \emph{assumed} that the equivalent measure extension
problem has a solution. In this section we specialize the setup from
Section~\ref{SY}, imposing further assumption that enable us to prove
the existence of a particular solution, and to describe this solution
explicitly. This is done in Section~\ref{SSMEP1}. Some examples where
the main result (Theorem~\ref{Tmain11} below) applies are then
discussed in Section~\ref{Sex}. The symbol $|\cdot|$ denotes the
usual Euclidean norm, and $\nabla$ is the gradient.

\subsection{Linear shrinkage in a Brownian setting} \label{SSMEP1}

We make the following assumptions, within the framework described in
Section~\ref{SY}:
\begin{itemize}
\item$E=\mathbb R^q$, some $q\in\mathbb N$.
\item$P$ is Wiener measure, turning the coordinate process $Y$ into
$q$-dimensional Brownian motion (possibly starting from $Y_0\ne0$).
\item$h$ is such that $\frac{1}{h}$ is harmonic on $\mathbb R^q
\setminus E_0$, where we define
\[
E_0=h^{-1}\bigl(\{0\}\bigr).
\]
\item$\pi\dvtx E\to E$ is linear, and we set $D = \pi(\mathbb R^q)$ and
$p=\dim D=\operatorname{rank}\pi$. We assume $p<q$, since otherwise
we have $\mathbb
F=\mathbb G$, in which case the equivalent measure extension problem
has a solution precisely when $N$ is already a martingale under~$P$.
\end{itemize}

The main result is the following.

%
\begin{theorem} \label{Tmain11}
Consider the setup just described, and assume furthermore that $h$
satisfies the following conditions:
%
%
\begin{eqnarray}
\label{eqlbm1} t &\mapsto& E^P \biggl[ \frac{ |\nabla\ln h(Y_t)| }{h(Y_t)} \biggr]
\qquad\mbox{is locally bounded on } [0,\infty),
\\
(t,x) &\mapsto& E^P \biggl[ \frac{|\pi(\nabla\ln h(Y_t))|
}{h(Y_t)}\biggm|
\pi(Y_t)=x \biggr]
\nonumber\\[-10pt] \label{eqlbm2}  \\[-10pt]
\eqntext{\mbox{is locally bounded on } (0,\infty )\times D,}
\end{eqnarray}
where the right-hand side of~(\ref{eqlbm2}) should be understood in
the sense of regular conditional probabilities. Then the equivalent
measure extension problem has a solution~$Q$ with the property that
%
%
\begin{equation}
\label{eqYQ} \qquad W = \biggl( Y_t - Y_0 + \int
_0^{t\wedge\tau} \nabla\ln h(Y_s)\,ds
\biggr)_{t\ge0}\qquad\mbox{is $Q$-Brownian motion},
\end{equation}
where the integral is well-defined and finite for each $t\ge0$,~$Q$-a.s.
\end{theorem}
%

%
\begin{remark}
The role of condition~(\ref{eqlbm1}) is primarily to ensure that the
optional projection of $Y$ under $Q$ can be computed in a reasonable
way. Moreover, since trivially $\pi$ is a bounded operator, (\ref
{eqlbm1}) also implies that the conditional expectation in~(\ref
{eqlbm2}) is finite for each $(t,x)\in(0,\infty)\times D$. The role of
condition~(\ref{eqlbm2}) is to ensure that $\mathbb F$ is small enough
for the projection operation to induce sufficient smoothing. In
particular, if $D$ is zero-dimensional, so that $\mathbb F$ is the
trivial filtration, then~(\ref{eqlbm2}) automatically holds.
\end{remark}

%
\begin{remark}
Unfortunately the assumptions of Theorem~\ref{Tmain11} are quite
restrictive. While they do allow us to treat the example by F\"ollmer
and Protter mentioned in the \hyperref[SB]{Introduction}, a major open problem for
future research is to find more general conditions under which the
equivalent measure extension problem can be solved.
\end{remark}

%
\begin{remark}
Theorem~\ref{Tmain11} is a closely related to Doob's $h$-transform.
Indeed, one can view $P$ as being obtained from $Q$ by conditioning $Y$
never to hit the zero set of $h$. Note, however, that $Y$ is not
Markovian under $Q$ due to the presence of $\tau$.
\end{remark}

The rest of this section is devoted to the proof of Theorem~\ref{Tmain11}. The strategy can be summarized as follows: we first exhibit
an extension $Q$ of $Q_0$ for which (\ref{eqYQ}) holds. Then we
describe the law of $X=\pi(Y)$ under $P$ and under $Q$. Finally, this
description is used to show that the laws are locally equivalent. Since
$X$ generates $\mathbb F$ this yields the result. We now turn to the
details, which are carried out through a~sequence of lemmas.

%
\begin{lemma} \label{LQ0fin}
Assume that (\ref{eqlbm1}) is satisfied. Then the inequality
%
%
\begin{equation}
\label{eqQ0fin} \int_0^t E^{Q_0}
\bigl[ \bigl|\nabla\ln h(Y_s)\bigr| {\mathbf1_{\{s<\tau\}}} \bigr] \,ds <
\infty
\end{equation}
holds for every $t\ge0$. Consequently, there is an extension $Q$ of
$Q_0$ for which (\ref{eqYQ}) holds.
\end{lemma}
%

\begin{pf}
We have
\[
E^{Q_0} \bigl[ \bigl|\nabla\ln h(Y_t)\bigr| {\mathbf1_{\{t<\tau\}}}
\bigr] = E^P \biggl[\frac{1}{h(Y_t)} \bigl|\nabla\ln h(Y_t)\bigr|
\biggr].\vadjust{\goodbreak}
\]
By (\ref{eqlbm1}), the right-hand side is locally integrable in $t$ on
$[0,\infty)$, which implies (\ref{eqQ0fin}). We may therefore define
an $E_\Delta$-valued process $W$ by
\[
W_t = Y_t - Y_0 + \int_0^{t\wedge\tau}
\nabla\ln h(Y_s) \,ds, \qquad t\ge0,
\]
using (\ref{eqQ0fin}) to see that the integral on the right-hand side
is well defined and finite. Now, for each~$n$, $N^{\tau_n}$ is the
density process of the restriction of $Q_0$ to $\mathcal G_{\tau_n}$
with respect to $P$. (Recall that $\tau_n$ is the minimum of $n$ and
the first time $N_t$ hits level~$n$.) We observe that, by It\^o's formula,
\[
N_t = \frac{1}{h(Y_t)} = 1 - \int_0^t
N_s \nabla\ln h(Y_s) \,dY_s, \qquad t<\tau,
\]
so that an application of Girsanov's theorem yields that $(W_{t\wedge
\tau_n}\dvtx t\ge0)$ is a local martingale for each $n$. Since $\langle
W^i,W^j\rangle_{t\wedge\tau_n} = (t\wedge\tau_n)\delta_{ij}$, it
is in
fact a martingale behaving like stopped Brownian motion. A standard
argument based on Doob's up- and downcrossing inequalities then shows
that the limit $\lim_{t\uparrow\tau}W_t$ exists in $\mathbb R^q$ on
$\{
\tau<\infty\}$, $Q_0$-a.s. As a consequence, $Y_{\tau-}$ also exists on
$\{\tau<\infty\}$, and is different from $\Delta$. We now simply choose
the law $Q$ so that $Y_{\tau}=Y_{\tau-}$ and $(Y_{\tau+t}-Y_{\tau
}\dvtx t\ge
0)$ is Brownian motion.
\end{pf}

Since $Y-Y_0$ is Brownian motion under $P$, it is clear that the same
holds for $X-X_0=\pi(Y-Y_0)$, but with a possibly different quadratic
covariation depending on~$\pi$. The following lemma describes what
happens under~$Q$.

%

%
\begin{lemma} \label{LXdec}
Assume that (\ref{eqlbm1}) is satisfied, and let $Q$ be an extension
of~$Q_0$ for which (\ref{eqYQ}) holds (it exists by Lemma~\ref{LQ0fin}).
The process $X$ can then be decomposed as
\[
X_t = X_0 + B_t + \int_0^t
\theta_s \,ds\qquad\mbox{for all } t\ge0, Q\mbox{-a.s.},
\]
where $B$ is $(\mathbb F,Q)$~Brownian motion (with the same quadratic
covariation as~$X$), and $\theta_t$ satisfies, for every $t\ge0$,
\[
\theta_t = E^Q \bigl[ \pi\bigl( \nabla\ln
h(Y_t) \bigr) {\mathbf1_{\{\tau>t\}
}} \mid\mathcal F_t
\bigr]\qquad Q\mbox{-a.s.}\quad\mbox{and}\quad \int_0^tE^Q
\bigl[ |\theta_s| \bigr] \,ds < \infty.
\]
\end{lemma}

\begin{pf}
Due to Lemma~\ref{LQ0fin}, the optional projection of $\pi(\nabla
\ln
h(Y_t)){\mathbf1_{\{\tau>t\}}}$ onto $\mathbb F$ is well defined under
$Q$. Denoting
this optional projection by $\theta$ it is clear that the given
expression for $\theta$ and the integrability statement are correct.
From~(\ref{eqYQ}), the definition of $X_t$ and the linearity of $\pi$
we obtain
\begin{eqnarray*}
X_t &=& E^Q \bigl[ \pi(Y_t) \mid\mathcal
F_t \bigr]
\\
&=& \pi(Y_0) + E^Q \bigl[ \pi(W_t) \mid
\mathcal F_t \bigr] - E^Q \biggl[ \int
_0^t \pi\bigl(\nabla\ln h(Y_s)
\bigr) {\mathbf1_{\{s<\tau\}}} \,ds \biggm| \mathcal F_t \biggr]
\\
&=& X_0 + B_t - \int_0^t
\theta_s \,ds,
\end{eqnarray*}
where we define $B_t = E^Q [ \pi(W_t) \mid\mathcal F_t ] +
L_t$ with
\begin{eqnarray*}
L_t &=& E^Q \biggl[ \int_0^t
\pi\bigl(\nabla\ln h(Y_s)\bigr) {\mathbf1_{\{s<\tau
\}}} \,ds \biggm|\mathcal F_t \biggr]
\\
&&{}- \int_0^t E^Q \bigl[ \pi
\bigl(\nabla\ln h(Y_s)\bigr) {\mathbf1_{\{
s<\tau\}}} \mid \mathcal
F_s \bigr] \,ds.
\end{eqnarray*}
Suppose we know $B$ is a (local) martingale. Since its quadratic
covariation coincides with that of~$X$, we deduce from L\'evy's theorem
that $B$ is $(\mathbb F,Q)$~Brownian motion with that quadratic
covariation. To see that $B$ is indeed a martingale, first note that
each component of $E^Q [ \pi(W_t) \mid\mathcal F_t ]$ is the
projection of a linear combination of martingales, hence itself a
martingale. Next, we make use of the following well-known result from
filtering theory (see~\cite{LiptserShiryaev1977}, Theorem~7.12): if
$\xi$ is a measurable process with $\int_0^t E^Q[|\xi_s|]\,ds<\infty$ for
all $t\ge0$, then
\[
E^Q \biggl[ \int_0^t
\xi_s \,ds \biggm|\mathcal F_t \biggr] - \int
_0^t E^Q [ \xi_s \mid
\mathcal F_s ]\,ds, \qquad t\ge0,
\]
is an $(\mathbb F,Q)$ martingale. Applying this to each component of
$L$ shows that it is a martingale. This completes the proof.
\end{pf}

We now have a description of the law of $X$ under~$P$ and under~$Q$. It
remains to show that these laws are locally equivalent, and this is
where condition~(\ref{eqlbm2}) is crucial. A priori, (\ref{eqlbm2})
only asserts boundedness on compact sets \emph{bounded away from $\{0\}
\times D$.} The following result shows that this can be strengthened
without imposing any additional assumptions. The proof uses the
Moore--Penrose inverse to decompose $Y_t$ into an observable component
and an independent component.

%
\begin{lemma} \label{Llbm2s}
Assume condition~(\ref{eqlbm2}) is satisfied. Then there is some
$\varepsilon>0$, and an open set $O\subset D$ containing $X_0$, such
that the function in~(\ref{eqlbm2}) is bounded on $(0,\varepsilon
]\times O$.
\end{lemma}

\begin{pf}
Define $G(y)=h(y)^{-1}\llvert  \pi(\nabla\ln h(y))\rrvert $, and let
$\pi^+$ be the Moore--Penrose inverse of the linear map $\pi$. Since $\pi
^+$ is invertible on $D$ (its inverse is~$\pi$), the function in~(\ref
{eqlbm2}) can be written
\[
E^P \bigl[ G(Y_t) \mid\pi(Y_t)=x \bigr] =
E^P \bigl[ G(Y_t) \mid U_t=\pi ^+(x) \bigr],
\]
where we set $U_t=\pi^+\pi(Y_t)$. Now decompose $Y_t$ as
\[
Y_t = \pi^+\pi(Y_t) + \bigl(\operatorname{Id} - \pi^+\pi
\bigr) (Y_t) = U_t + V_t
\]
($V_t$ is defined by this relation), and note that
\[
\pi^+\pi\bigl(\operatorname{Id} - \pi^+\pi\bigr) = \pi^+\pi- \pi^+\pi\pi ^+\pi=
\pi ^+\pi- \pi^+\pi= 0
\]
by basic properties of the Moore--Penrose inverse. Hence $Y_t=U_t +
V_t$ is the decomposition of $Y_t$ as a direct sum in $D\oplus D^\perp
$. In particular $U_t$ and $V_t$ are independent under $P$, so
\[
E^P \bigl[ G(Y_t) \mid U_t=\pi^+(x) \bigr] =
E^P \bigl[ G(u + V_t) \bigr]_{u=\pi^+(x)}.
\]
We now focus on bounding $E^P[ G(z + V_t)]$. The random variable $V_t$
concentrates on $D^\perp$ and is nondegenerate Normal there, so it has
a density with respect to Lebesgue measure on $D^\perp$ given by
\[
f_t(v) = \frac{1}{(2\pi t)^{m/2}|\det\Sigma|^{1/2}} \exp \biggl( -\frac
{1}{2t}
(v-V_0)^\top\Sigma^{-1} (v-V_0)
\biggr), \qquad v \in D^\perp.
\]
Here $m=q-p=\dim D^\perp$ and, by a slight abuse of notation, $\Sigma
^{-1}$ the inverse on~$D^\perp$ of the covariance operator of~$V_t$,
with $\det\Sigma$ being its determinant.

Now, let $\varepsilon>0$ be a number to be determined later. We let
$\mathcal B=\{u\in D\dvtx |u-U_0|<\varepsilon\}$ be the ball in $D$ of
radius $\varepsilon$ centered at $U_0$, and $\mathcal E$ be the
ellipsoid in $D^\perp$ given by
\[
\mathcal E = \biggl\{ v \in D^\perp\dvtx  \frac{1}{m}
(v-V_0)^\top\Sigma ^{-1} (v-V_0) <
\varepsilon \biggr\}.
\]
The following can be verified by direct differentiation:

\textit{Claim}: Fix $\alpha>0$ and $\beta>0$, and let $\psi
(t)=t^{-\alpha
/2}\exp(-t^{-1}\beta/2)$. Then $\psi$ is nondecreasing on the interval
$[0,\beta/\alpha]$.

The claim shows that whenever $v \notin\mathcal E$, $f_t(v)$ decreases
as $t$ decreases. This gives us the following bound for any $t\in
(0,\varepsilon]$:
\begin{eqnarray*}
E^P \bigl[ G(z+ V_t) \bigr] &=& \int_{\mathcal E}
G(u+v)f_t(v) \,dv + \int_{D^\perp\setminus\mathcal E} G(u+v)f_t(v)
\,dv
\\
&\le&\sup_{v\in\mathcal E} G(u+v) + \int_{D^\perp\setminus\mathcal E}
G(u+v)f_\varepsilon(v) \,dv
\\
&\le&\sup_{v\in\mathcal E} G(u+v) + E^P \bigl[
G(z+V_\varepsilon ) \bigr].
\end{eqnarray*}
Therefore,
\[
\sup_{(t,u)\in(0,\varepsilon]\times\mathcal B} E^P \bigl[ G(u+ V_t)
\bigr] \le\sup_{y\in\mathcal B\oplus\mathcal E}G(y) + \sup_{u\in
\mathcal
B}E^P
\bigl[ G(u+V_\varepsilon) \bigr].
\]
By smoothness of $h$ outside $E_0$ and the fact that $h(Y_0)=1$, it is
possible to choose $\varepsilon>0$ small enough that the set $\mathcal
B\oplus\mathcal E$, which is a neighborhood of $Y_0$, is bounded away
from $E_0$. With such an $\varepsilon$, the first term on the
right-hand side above is finite. The second term is also finite due to
the local boundedness assumption~(\ref{eqlbm2}). Setting $O=\pi
(\mathcal B)$, which is again open in $D$, gives the statement of the lemma.
\end{pf}

The same orthogonal decomposition of $Y_t$ as in the proof of
Lemma~\ref{Llbm2s} gives the following unsurprising result.

%
\begin{lemma} \label{Lexpexp}
Consider a nonnegative measurable function $G\dvtx \mathbb E\to\mathbb
R_+$. The equality
\[
E^P \bigl[ G(Y_t) \mid\mathcal F_t \bigr] =
E^P \bigl[ G(Y_t) \mid\pi(Y_t) = x
\bigr]_{x=X_t}
\]
holds $P$-a.s.~for all $t\ge0$.
\end{lemma}

\begin{pf}
With the notation from the proof of Lemma~\ref{Llbm2s} we get, $P$-a.s.,
\begin{eqnarray*}
E^P \bigl[ G(Y_t) \mid\mathcal F_t \bigr]
&=&E^P \bigl[ G(Y_t) \mid X_s\dvtx s\le t \bigr]
\\
&=&E^P\bigl[ G(U_t+V_t) \mid
U_s\dvtx s\le t\bigr]
\\
&=&E^P\bigl[ G(u+V_t)\bigr]_{u=U_t}
\\
&=&E^P\bigl[ G\bigl(\pi^+(x)+V_t\bigr)
\bigr]_{x=X_t}.
\end{eqnarray*}
By means of an analogous calculation, the right-hand side is also seen
to be equal to $E^P [ G(Y_t) \mid\pi(Y_s) = x  ]_{x=X_s}$.
\end{pf}

The following simple refinement of Bayes's rule is useful for dealing
with nonequivalent measures.

%
\begin{lemma} \label{Lbrref1}
Suppose $R_1\ll R_2$ are two probability measures with Radon--Nikodym
derivative $Z=\frac{dR_1}{dR_2}$, and let $X$ be a random variable in
$L^1(R_1)$. Let $\mathcal H$ be a sub-$\sigma$-field, and suppose
$A\in
\mathcal H$ satisfies $A\subset\{ E^{R_2}[Z \mid\mathcal H] > 0\}$.
Then \mbox{$E^{R_1} [ X \mid\mathcal H  ]$} is uniquely defined on
$A$ up to an $R_2$-nullset, and we have
\[
E^{R_2} [Z\mid\mathcal H ]E^{R_1} [X\mid\mathcal H ]
\mathbf1_A=E^{R_2} [Z X \mathbf1_A \mid\mathcal H
]
\]
$R_2$-a.s. (and hence $R_1$-a.s.).
\end{lemma}

\begin{pf}
To prove the first statement, let $Y$ and $Y'$ be two versions of
\mbox{$E^{R_1}[X\mid\mathcal H]$}. Then $R_1(Y\neq Y')=0$, and we get
\[
0 = R_1 \bigl( \bigl\{Y\neq Y'\bigr\}\cap A \bigr) =
E^{R_2} \bigl[ E^{R_2} [Z \mid\mathcal H ]
\mathbf1_{\{Y\neq Y'\}\cap A} \bigr].
\]
Since $E^{R_2}[Z\mid\mathcal H]>0$ on $A$, we get $R_2(\{Y\neq Y'\}
\cap
A)=0$, as desired. The second statement follows from the following
calculation, where $B\in\mathcal H$ is arbitrary:
\begin{eqnarray*}
E^{R_2} \bigl[ E^{R_2} [ Z \mid\mathcal H ] E^{R_1} [
X\mid \mathcal H ] \mathbf1_{A\cap B} \bigr] &=& E^{R_2} \bigl[ Z
E^{R_1} [ X\mid\mathcal H ] \mathbf 1_{A\cap
B} \bigr]
\\
&=& E^{R_1} [ X \mathbf1_{A\cap B} ]
\\
&=& E^{R_2} [Z X \mathbf1_{A\cap B} ].
\end{eqnarray*}\upqed
\end{pf}

The next lemma is the key to proving that the laws of $X$ under $P$ and
$Q$ are equivalent. It relies on the strengthening of condition~(\ref
{eqlbm2}) given in Lemma~\ref{Llbm2s}.

%
\begin{lemma} \label{LThF}
Assume that (\ref{eqlbm1}) and (\ref{eqlbm2}) are satisfied, and let
$\theta$ and $Q$ be as in Lemma~\ref{LXdec}. For each $t\geq0$, we have
\[
\int_0^t |\theta_s|^2
\,ds < \infty\qquad Q\mbox{-a.s.}
\]
\end{lemma}

\begin{pf}
We would like to rewrite $\theta_t$ using Lemma~\ref{Lbrref1}, so we
verify the assumptions of that lemma. To this end, define
\[
\sigma_0 = \inf \bigl\{ t\geq0\dvtx  Q(\tau>t\mid\mathcal
F_t) = 0 \bigr\} = \inf \bigl\{ t\geq0\dvtx  E^Q[M_t
\mid\mathcal F_t] = 0 \bigr\},
\]
where the equality follows from Lemma~\ref{Lme1}. Then $\tau\leq
\sigma
_0$, $Q$-a.s., so the expression for $\theta$ yields
\[
\theta_t {\mathbf1_{\{\sigma_0\leq t\}}} = E^Q \bigl[\pi\bigl(
\nabla\ln h(Y_t)\bigr) \mathbf1_{\{
\tau>t\}\cap\{\sigma_0\leq t\}}\mid\mathcal
F_t \bigr] = 0.
\]
Hence $\theta_t = \theta_t{\mathbf1_{\{\sigma_0>t\}}}$. Now, set
$H=\pi(\nabla\ln
h(Y_t)){\mathbf1_{\{\tau>t\}}}$. Then
\[
E^P \bigl[|H| \bigr]=E^Q \bigl[|M_tH| \bigr]=E^Q \bigl[|
\theta _t| \bigr],
\]
which is finite by Lemma~\ref{LXdec}. Since also $E^P[M_t\mid
\mathcal
F_t]>0$ on $\{\sigma_0>t\}$, we may apply Lemma~\ref{Lbrref1} with
$R_1=P$ and $R_2=Q$ to get, $Q$-a.s.,
\begin{eqnarray*}
\theta_t &=& E^Q \bigl[ \pi\bigl(\nabla\ln
h(Y_t)\bigr) {\mathbf1_{\{\tau>t\}}}\mid \mathcal F_t
\bigr] {\mathbf1_{\{\sigma_0>t\}}}
\\
&=& E^P \biggl[ \frac{1}{h(Y_t)} \pi\bigl(\nabla\ln
h(Y_t)\bigr) \biggm|\mathcal F_t \biggr] E^Q [
M_t \mid\mathcal F_t ].
\end{eqnarray*}
Now, since $E^Q [ M_t \mid\mathcal F_t ]$ is a finite, c\`
adl\`
ag process, it is pathwise bounded on each $[0,t]$ (with the bound
depending on $t$ and $\omega$ in a possibly nonpredictable way). It
thus suffices to prove that $\int_0^{t\wedge\sigma_0} |\xi_s|^2
\,ds<\infty$, $Q$-a.s., where $\xi_s = E^P[ h(Y_s)^{-1} \pi(\nabla\ln
h(Y_s)) \mid\mathcal F_s]$. By Lemma~\ref{Lbrref1} this conditional
expectation is uniquely defined $P$- and $Q$-a.s. on $\{s<\sigma_0\}$.
Therefore, by Lemma~\ref{Lexpexp}, the equality
\[
\xi_s = E^P \biggl[\frac{1}{h(Y_s)} \pi\bigl(\nabla
\ln h(Y_s)\bigr) \biggm|\pi(Y_s) = x
\biggr]_{x=X_s}
\]
holds $Q$-a.s.~on $\{s<\sigma_0\}$.\vadjust{\goodbreak}

Now, let $O\subset D$ and $\varepsilon>0$ be the objects obtained from
Lemma~\ref{Llbm2s}, and define
\[
\rho_\varepsilon=\inf\{0\le t\le\varepsilon\wedge\sigma_0\dvtx
X_t\notin O\}.
\]
Since $O$ is open and contains $X_0$, we have $\rho_\varepsilon>0$,
$Q$-a.s. (Note that $\sigma_0>0$ by right continuity of $E^Q[M_t\mid
\mathcal F_t]$.) The properties of $O$ and $\varepsilon$ imply that
$\xi
_s$ is bounded on $(0,\rho_\varepsilon)$. Furthermore, the local
boundedness condition~(\ref{eqlbm2}) implies that $\xi_s$ is pathwise
bounded on $[\rho_\varepsilon,t\wedge\sigma_0)$ (again with a random
bound). It follows that $\xi$ is square integrable on $(0,t\wedge
\sigma
_0)$, which is what we had to show. The proof of the lemma is now complete.
\end{pf}

\begin{pf*}{Proof of Theorem~\ref{Tmain11}}
We need to prove that $Q$ and $P$ are equivalent on each $\mathcal
F_t$. By Lemmas~\ref{LXdec} and~\ref{LThF}, we can define a strictly
positive $(\mathbb F,Q)$~local martingale $Z$ via
\[
Z_t = \exp \biggl( \int_0^t
\theta_s^\top \,dB_s - \frac{1}{2}\int
_0^t |\theta_s|^2 \,ds
\biggr), \qquad t\ge0.
\]
Consequently, since $\mathbb F$ is a standard system, we can find the
F\"ollmer measure associated with $Z$. To be precise, define stopping times
\[
\rho_n = \inf\{t\ge0\dvtx  Z_t \ge n\}, \qquad\rho= \lim
_{n\to\infty
} \rho_n.
\]
Then there is a unique probability $R_0$ on $\mathcal F_{\rho-}$ such
that $\frac{dQ}{dR_0} |_{\rho_n-}=\frac{1}{Z_{\rho_n}}$ for each
$n$. Girsanov's theorem and L\'evy's characterization of Brownian
motion then imply that the process
\[
X_{t\wedge\rho_n} - X_0 = B_{t\wedge\rho_n} - \int
_0^{t\wedge\rho_n} \theta_s \,ds, \qquad t\ge0,
\]
is Brownian motion (with some invertible volatility matrix) stopped at
$\rho_n$. Moreover, since $X$ generates the filtration $\mathbb F$,
$\rho_n$ only depends on the path of $X$. Therefore the law of
$(X_{t\wedge\rho_n}\dvtx t\ge0)$ under $R_0$ is the same as its law under
$P$. \mbox{Consequently,} since $\int_0^t\theta^2_s\,ds<\infty$ for all $t\ge0$,
$P$-a.s., so that $P(\rho=\infty)=1$, we also have $R_0(\rho=\infty
)=1$. It follows that $X-X_0$ (not stopped this time) is Brownian
motion under $R_0$, and we deduce that $R_0=P$ on each $\mathcal F_t$.
This leads to the domination relations
\[
P|_{\mathcal F_t} \ll Q|_{\mathcal F_t} \ll R_0 |_{\mathcal F_t} =
P|_{\mathcal F_t},
\]
which proves the theorem.
\end{pf*}

\section{Examples} \label{Sex}

In this section we discuss some examples where the conditions of
Theorem~\ref{Tmain11} can be verified explicitly. We also give one
recipe for how new examples can be constructed from old ones.\vadjust{\goodbreak}

%
\begin{example}[(The inverse Bessel process)]
Let $E=\mathbb R^3$, and suppose $Y_0=(1,0,0)$. Take $h(y)=|y|$. Then
$1/h$ is harmonic on $\mathbb R^3\setminus\{0\}$, and $N$ is the
reciprocal of a BES(3) process. In particular it is a strict local
martingale. To specify the smaller filtration, we let $\pi$ be a
projection onto the first coordinate of~$\mathbb R^3$. This puts us
exactly in the example analyzed by F\"ollmer and Protter \cite
{FollmerProtter2011}, mentioned in the \hyperref[SB]{Introduction}.

Let us verify conditions (\ref{eqlbm1}) and (\ref{eqlbm2}) of
Theorem~\ref{Tmain11}. First, note that $\nabla h(y) = y |y|^{-1}$,
so that
\[
E^P \biggl[ \frac{1}{h(Y_t)} \bigl|\nabla\ln h(Y_t)\bigr|
\biggr] = E^P \biggl[ \frac
{1}{h(Y_t)^2} \biggr] = E^P
\bigl[N_t^2 \bigr].
\]
The well-known fact that $t\mapsto E^P[N_t^2]$ is bounded (see
Chapter~1.10 in~\cite{ChungWilliams1990}) directly implies~(\ref
{eqlbm1}). To prove~(\ref{eqlbm2}), write
\begin{eqnarray*}
F(t,x) &=& E^P \biggl[ \frac{1}{h(Y_t)}\bigl| \pi\bigl(\nabla\ln
h(Y_t)\bigr)\bigr| \Bigm |\pi (Y_t)=x \biggr] =
E^P \biggl[ \frac{|Y^1_t|}{|Y_t|^3} \biggm|Y^1_t=x
\biggr]
\\
&=& E^P \biggl[ \frac{|x|}{[x^2 + (Y^2_t)^2 + (Y^3_t)^2]^{3/2}} \biggr],
\end{eqnarray*}
where the last equality follows from the independence of the components
of $Y$. By the scaling property of Brownian motion,
$F(t,x)=t^{-1}F(1,t^{-1/2}x)$. To prove local boundedness of $F$ on
$(0,\infty)\times\mathbb R$ it is therefore enough to show that
$x\mapsto F(1,x)$ is locally bounded on $\mathbb R$. Noting that the
random variable $Z=(Y^2_1)^2 + (Y^3_1)^2$ is $\chi^2_2$~distributed,
we obtain
\begin{eqnarray*}
F(1,x) &=& E^P \biggl[ \frac{|x|}{(x^2 + Z)^{3/2}} \biggr]
\\
&=& \frac{|x|}{2} \int_0^\infty
\bigl(x^2+z\bigr)^{-3/2} e^{-z/2} \,dz
\\
&\leq&\frac{|x|}{2} \int_0^\infty
\bigl(x^2+z\bigr)^{-3/2}\,dz = 1.
\end{eqnarray*}
We thus obtain~(\ref{eqlbm2}), as required.

To connect this example with the theory developed in the previous
sections, note that the set $D_0=\pi\circ h^{-1}(\{0\})$ is simply
equal to $\{0\} \subset\mathbb R$. Theorem~\ref{TdL} then tells us
that the process $\Lambda$ only increases on the set $\{t\dvtx Y^1_t=0\}$.
In view of Proposition~\ref{PNdec}, this explains the appearance of
the local time in the expression for $E^P[N_t\mid\mathcal F_t]$ found
by F\"ollmer and Protter; see (\ref{eqFP}) in the \hyperref[SB]{Introduction}.
\end{example}

%
\begin{example}[(The inverse Bessel process embedded in $\mathbb R^4$)]
\label{ex2}
We now consider what happens when the previous example is embedded in
$\mathbb R^4$. Thus, we set \mbox{$E=\mathbb R^4$}, and let $Y$ start from
$(1,0,0,0)$. The function $h$ is now given by
\[
h(y) = |\bar y|\qquad\mbox{where }\bar y = (y_1,y_2,y_3).
\]
In other words, $h(y)$ is the distance between $y$ and the $y_4$-axis.
Then $N_t=1/h(Y_t)$ is a again the reciprocal of a BES(3) process, and
again a strict local martingale. It is clear that $1/h$ is harmonic
outside the $y_4$-axis, $E_0=\{y\dvtx  \bar y=0\}$. We let $\pi$ be
given by the following matrix representation in the canonical basis on
$\mathbb R^4$:
\[
\pi(y) = Ay\qquad\mbox{where } A=\pmatrix{ 1 & 0 & 0 & \alpha_1
\cr
0 & 1 & 0 & \alpha_2
\cr
0 & 0 & 0 & 0
\cr
0 & 0 & 0 & 0}\qquad
\mbox{for some } \alpha_1,\alpha_2 \in\mathbb R\setminus
\{0\}.
\]
Note that $D=\pi(E)$ can be identified with $\mathbb R^2$. We proceed
to verify conditions (\ref{eqlbm1}) and (\ref{eqlbm2}). First, the
gradient of $h$ is given by
\[
\nabla h(y) = \biggl( \frac{\bar y}{|\bar y|}, 0 \biggr) \in\mathbb
R^4.
\]
Hence
\[
E^P \biggl[ \frac{1}{h(Y_t)} \bigl|\nabla\ln h(Y_t)\bigr|
\biggr] = E^P \bigl[ N_t^2 \bigr]
\]
and we get (\ref{eqlbm1}) as in the previous example. We continue
with~(\ref{eqlbm2}), and define
\[
F(t,x) =\pmatrix{ F_1(t,x)
\cr
F_2(t,x)}, \qquad
F_i(t,x)=E^P \biggl[ \frac{|\pi(\nabla\ln h(Y_t))_i|}{h(Y_t)} \biggm |\pi
(Y_t)=x \biggr].
\]
Using the definition of $h$, the expression for $\nabla h$, and the
definition of $\pi$, one gets
\[
F_i(t,x) = E^P \biggl[ \frac{|Y^i_t|}{|\widebar Y_t|^3} \biggm|\pi
(Y_t) = x \biggr], \qquad i=1,2.
\]
The Brownian scaling property again shows that
$F(t,x)=t^{-1}F(1,t^{-1/2}x)$, so just as in the previous example we
need only consider $F(1,x)$. Next,
%
%
\begin{equation}
\label{eqFi1} F_i(1,x) \le E^P \biggl[
\frac{|Y^i_1|}{[ (Y^i_1)^2 + (Y^3_1)^2 ]^{3/2}} \biggm|\pi(Y_1) = x \biggr].
\end{equation}
To continue, we need to know the distribution of $(Y^i_1,Y^3_1)$
conditionally on $\pi(Y_1)=x$, for $i=1,2$. This can, for instance, be
done using the formula for the conditional multivariate Normal, applied
to the multivariate Normal vector $(Y^i_1,Y^3_1,\pi(Y_1))$. The result
of this calculation is that $Y^i_1$ and $Y^3_1$ are conditionally
independent, with $Y^3_1$ having mean zero and unit variance, and
$Y^i_1$, $i=1,2$, satisfying
\begin{eqnarray*}
\mu_1 &=& E\bigl[Y^1_1\mid
\pi(Y_1)=x\bigr] = 1 + \frac{(\alpha_2^2+1)(x_1-1) -
\alpha_1\alpha_2x_2}{1+\alpha_1^2+\alpha_2^2},
\\
\mu_2 &=& E\bigl[Y^2_1\mid
\pi(Y_1)=x\bigr] = 1 + \frac{(\alpha_1^2+1)x_2 -
\alpha
_1\alpha_2(x_1-1)}{1+\alpha_1^2+\alpha_2^2},
\\
\sigma_i^2 &=& \operatorname{Var}\bigl[Y^i_1
\mid\pi(Y_1)=x\bigr] = \frac
{\alpha
_i^2}{1+\alpha_1^2 + \alpha_2^2}.
\end{eqnarray*}
Continuing from~(\ref{eqFi1}) and using that $\alpha_1$ and $\alpha_2$
are nonzero,
\[
F_i(1,x) \le\frac{1}{2\pi\sigma_i} \int_{-\infty}^\infty
\int_{-\infty}^\infty\frac{|u|}{(u^2 + v^2)^{3/2}} \exp \biggl( -
\frac
{(u-\mu
_i)^2}{2\sigma_i^2} - \frac{v^2}{2} \biggr) \,du \,dv.
\]
Now split the inner integral (with variable $u$) into two parts: the
first over $(-1,1)$ and the second over $\mathbb R\setminus(-1,1)$.
Starting with the first part, we get
\begin{eqnarray*}
&&\frac{1}{2\pi\sigma_i} \int_{-\infty}^\infty\int
_{-1}^1 \frac
{|u|}{(u^2 + v^2)^{3/2}} \exp \biggl( -
\frac{(u-\mu_i)^2}{2\sigma
_i^2} - \frac{v^2}{2} \biggr) \,du \,dv
\\
&&\qquad\le\frac{1}{2\pi\sigma_i} \int_{-\infty}^\infty\int
_{-1}^1 \frac
{|u|}{(u^2 + v^2)^{3/2}}\,du\, e^{ - v^2/2} \,dv
\\
&&\qquad = \frac{1}{\pi\sigma_i} \int_{-\infty}^\infty \bigl(
\sqrt{1 + v^2} - \sqrt{v^2} \bigr) e^{ - v^2/2} \,dv
\\
&&\qquad \le\sqrt{\frac{2}{\pi}}\frac{1}{\sigma_i},
\end{eqnarray*}
where the last line used the inequality $\sqrt{a^2+b^2}\le|a|+|b|$ and
then the fact that the Normal density integrates to one. We now
consider the integral over the complementary set $\mathbb R\setminus
(-1,1)$. Since $u^2\ge1$ there, we get
\begin{eqnarray*}
&&\frac{1}{2\pi\sigma_i}  \int_{-\infty}^\infty\int
_{\mathbb
R\setminus
(-1,1)} \frac{|u|}{(u^2 + v^2)^{3/2}} \exp \biggl( -\frac{(u-\mu
_i)^2}{2\sigma_i^2} -
\frac{v^2}{2} \biggr) \,du \,dv
\\
&&\qquad \le\frac{1}{2\pi\sigma_i}\int_{-\infty}^\infty\int
_{\mathbb
R\setminus(-1,1)}|u| \exp \biggl( -\frac{(u-\mu_i)^2}{2\sigma_i^2} -
\frac
{v^2}{2} \biggr) \,du \,dv
\\
&&\qquad \le E^P \bigl[ \bigl|Y^i_1\bigr| \mid
\pi(Y_1)=x \bigr].
\end{eqnarray*}
The right-hand side is the expectation of a \emph{folded Normal}
distribution, and its value is a smooth function of $\mu_i$; see \cite
{Leoneetal1961} or compute directly. Consequently it is a locally
bounded function of $x$, and this finally shows that (\ref{eqlbm2}) holds.

Finally, note that $D_0 = \pi(E_0) = \{(\lambda\alpha_1,\lambda
\alpha
_2)\dvtx \lambda\in\mathbb R\}$. This is a proper subspace in $D=\mathbb
R^2$, and in particular it is Lebesgue-null. We would therefore expect
that the semimartingale decomposition of the projection of $N$ onto
$\mathbb F$ in this case also has a singular component.
\end{example}

%
\begin{example}[(A counterexample)]
Consider again the situation in Example~\ref{ex2}, but this time set
$\alpha_1=\alpha_2=0$. Then $Y^4$ does not play any role at all, and
$\mathbb F$ is generated by $(Y^1,Y^2)$. In this case the equivalent
measure extension problem has no solution---indeed, this corresponds to
projecting the inverse Bessel process onto the filtration $\mathbb
F^{1,2}$ mentioned in the \hyperref[SB]{Introduction}, and according to F\"ollmer and
Protter's results (Theorem~5.2 in~\cite{FollmerProtter2011}) this
projection is again a local martingale. Corollary~\ref{Cns} then shows
that no solution to the equivalent measure extension problem can be
found. Condition~(\ref{eqlbm2}) can therefore not be satisfied, and
this can indeed be verified directly: with $F_i(t,x)$ as in
Example~\ref{ex2}, we have
\begin{eqnarray*}
\bigl|F_i(1,x)\bigr| &=& E^P \biggl[ \frac{|x_i|}{[x_1^2 + x_2^2 + (Y^3_1)^2]^{3/2}} \biggr]
\\
&\ge&\frac{1}{\sqrt{2\pi e}} \int_{-1}^1
\frac
{|x_i|}{(x_1^2+x_2^2+u^2)^{3/2}} \,du
\\
&=& \sqrt{\frac{2}{\pi e}} \frac{|x_i|}{(x_1^2+x_2^2)\sqrt{1+x_1^2+x_2^2}}.
\end{eqnarray*}
The right-hand side is unbounded near the origin.
\end{example}

%
\begin{example}[(Building new examples from old)]
Suppose we have functions $h_1,\ldots,h_m$ such that for each $i$,
$1/h_i$ is harmonic outside $h_i^{-1}(\{0\})$. We define the set
\[
E_0 = \bigcup_{i=1}^m
h_i^{-1}\bigl(\{0\}\bigr)
\]
as the collection of points where some $h_i$ vanishes. We may then
define $h$ by
\[
\frac{1}{h} = \frac{1}{h_1} + \cdots+ \frac{1}{h_m} \qquad\mbox
{on } E\setminus E_0
\]
and extend it continuously to all of $E$ by setting $h(y)=0$, $y\in
E_0$. We have the following result.

%
\begin{lemma}
Consider $h$ and $E_0$ as above. The function $1/h$ is harmonic outside
$E_0$, and we have
\[
\frac{1}{h}\nabla\ln h = \frac{1}{h_1}\nabla\ln h_1 +
\cdots+ \frac
{1}{h_m}\nabla\ln h_m.
\]
\end{lemma}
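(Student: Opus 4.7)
The plan is to reduce both assertions to two elementary observations: linearity of the Laplacian (for harmonicity) and linearity of the gradient (for the drift identity), after rewriting $\tfrac{1}{h}\nabla \ln h$ as the negative gradient of $\tfrac{1}{h}$.

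First I would handle the harmonicity statement. By construction $1/h = \sum_{i=1}^m 1/h_i$ on $E\setminus E_0$, and for each $i$ the function $1/h_i$ is harmonic on $E\setminus h_i^{-1}(\{0\})\supset E\setminus E_0$ by hypothesis. Since $\Delta$ is a linear operator, the finite sum $1/h$ is harmonic on the intersection $E\setminus E_0$. This step is immediate and contains no real obstacle.

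The gradient identity is equally direct once one notices the pointwise algebraic identity
\[
\frac{1}{h}\nabla \ln h = \frac{\nabla h}{h^2} = -\nabla\!\left(\frac{1}{h}\right),
\]
valid wherever $h>0$, i.e.\ on $E\setminus E_0$. Applying the same identity to each $h_i$ (on $E\setminus h_i^{-1}(\{0\})$, hence on $E\setminus E_0$), and using linearity of the gradient applied to $1/h=\sum 1/h_i$, I would write
\[
\frac{1}{h}\nabla\ln h = -\nabla\!\left(\frac{1}{h}\right) = -\sum_{i=1}^m \nabla\!\left(\frac{1}{h_i}\right) = \sum_{i=1}^m \frac{1}{h_i}\nabla \ln h_i,
\]
which is the claimed equality on $E\setminus E_0$.

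There is no serious obstacle here; the only subtlety worth flagging is that all of the manipulations take place on the open set $E\setminus E_0$ where every $h_i$ is strictly positive and smooth, so division and the chain rule for $\ln$ are legitimate. No behavior of $h$ or its gradient at points of $E_0$ is asserted or needed.
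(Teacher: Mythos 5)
Your proof is correct and is essentially the paper's argument: harmonicity by linearity of the Laplacian, and the gradient identity from the elementary relation $\tfrac{1}{h}\nabla\ln h = \nabla h/h^2 = -\nabla(1/h)$ together with linearity of $\nabla$ applied to $1/h=\sum_i 1/h_i$ (the paper merely phrases the same computation by differentiating $h=(\sum_i 1/h_i)^{-1}$ directly). No gaps.
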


\begin{pf}
By linearity of the Laplacian it is clear that $\frac{1}{h}$ is
harmonic. The second statement follows from the following elementary
calculation:
\begin{eqnarray*}
\nabla h &=& \nabla \biggl[ \biggl( \frac{1}{h_1} + \cdots+
\frac{1}{h_m} \biggr)^{-1} \biggr]
\\
&=& - \biggl( \frac{1}{h_1} + \cdots+ \frac{1}{h_m}
\biggr)^{-2} \biggl( \nabla \biggl(\frac{1}{h_1} \biggr) + \cdots+
\nabla \biggl(\frac
{1}{h_m} \biggr) \biggr)
\\
&=& h^2 \biggl( \frac{1}{h_1}\nabla\ln h_1 + \cdots+
\frac
{1}{h_m}\nabla\ln h_m \biggr).
\end{eqnarray*}\upqed
\end{pf}

It follows directly from this lemma that if each $h_i$ satisfies (\ref
{eqlbm1}) and~(\ref{eqlbm2}), then the same will be true for $h$. A
simple application of this result is that any process $N$ of the form
\[
N_t = \frac{1}{|Y_t - y^{(1)}|} + \cdots+ \frac{1}{|Y_t - y^{(m)}|},
\]
where $y^{(1)},\ldots,y^{(m)} \in\mathbb R^3$ are fixed and different
from $Y_0$, induces a F\"ollmer measure that can be extended to an
equivalent measure on the subfiltration generated by $Y^1$.
\end{example}

\section{Applications in finance} \label{SApp}
We end with a brief discussion of some consequences for financial
modeling and arbitrage. The discussion will be kept on an informal
level, and we defer the development and analysis of concrete models to
future research. The notation from Sections~\ref{SB} and~\ref{SFM}
will be used freely. The first observation, which has been made
in~\cite
{FollmerProtter2011} and~\cite{JarrowProtter2013}, is that market
participants with limited information may perceive arbitrage
opportunities even if there are none. This interpretation arises when
$N$ is a price process, and less informed investors only see its
optional projection.

An alternative situation is the following. Consider a well-informed
fund manager with filtration~$\mathbb G$ who trades on behalf of less
informed investors with filtration~$\mathbb F$, in exchange for a fee.
Such arrangements are common, and arise \mbox{because} the fund manager has
superior information, and/or because he has cheaper (lower transactions
costs) access to the market. Suppose further that the measures~$P$
and~$Q$ represent competing beliefs regarding the future evolution of
the world, and suppose~$M$ is the value process of the fund manager's
investment strategy, where~$M$ reaching zero corresponds to bankruptcy.
If the beliefs~$Q$ (under which~$M$ may in fact hit zero) are
correct,~$M$ is a very risky investment. In contrast, under~$P$
bankruptcy happens with zero probability. The key point is that less
informed investors who estimate $M$ via its optional projection will
always obtain a strictly positive estimate, even if their beliefs are
correct and given by~$Q$ (where~$Q$ solves the equivalent measure
extension problem). In effect, the fund manager can run risky
strategies which, \emph{conditionally on no bankruptcy}, achieve
superior returns, while convincing investors that bankruptcy is
impossible. He can thus charge excessive fees, which allows him to
achieve arbitrage profits (for himself) by exploiting the fact that
investors are ill-informed.

Any model where effects of this type occur will necessarily include
components relating to the contractual relationship between investors
and fund manager, the investment horizon, what happens if $M$ does, in
fact, reach zero, and so forth. While such domain specific issues fall
outside the scope of the present paper, they are the subject of ongoing
research.

\section*{Acknowledgments}
The author would like to thank Robert Jarrow, Philip Protter and Johannes Ruf for
several very stimulating discussions. Thanks are also due to an anonymous referee, whose insightful comments led to several improvements of the paper.


%

\printaddresses

\end{document}